\newtheorem{thm}{Theorem}[section]
\newtheorem{lem}[thm]{Lemma}
\newtheorem{prop}[thm]{Proposition}
\theoremstyle{remark}
\newtheorem{rem}[thm]{Remark}
\newtheorem*{rem*}{Remark}
\theoremstyle{definition}
\numberwithin{equation}{section}
\newtheorem*{lem*}{{\rm\bf Lemma}}
\newcommand{\C}{\mathbb{C}}
\newcommand{\ofrac}[2]{\genfrac{}{}{0pt}{}{#1}{#2}}
\begin{document}

\title{The complex gradient inequality with parameter}

\author{Maciej P. Denkowski}\address{Jagiellonian University, Faculty of Mathematics and Computer Science, Institute of Mathematics, ul. \L ojasiewicza 6, 30-348 Krak\'ow, Poland}\email{maciej.denkowski@uj.edu.pl}
\date{March 4th 2014, {\it Revised:} April 4th 2014}
\keywords{Gradient inequality, holomorphic functions, branched coverings, Milnor number}
\subjclass{32S05, 14B05, 32A10}

\begin{abstract}
We prove that given a holomorphic family of holomorphic functions with isolated singularities at zero and constant Milnor number, it is possible to obtain the gradient inequality with a uniform exponent.
\end{abstract}

\maketitle


\section{Introduction}

This note was inspired by a recent article of A. P\l oski \cite{P2} concerning the semi-continuity of the \L ojasiewicz exponent in a family of multiplicity-constant deformation of a finite holomorphic germ (actually, our main tool is a method of P\l oski used already in \cite{P}). Of course, it would probably be possible to derive the main result presented herein from this article, but it seems interesting to give a direct, elementary and self-consistent proof, since it is a starting point for tackling a much more general problem. 

We are interested in the \L ojasiewicz Gradient Inequality: given an (complex or real) analytic (or subanalytic and of class $\mathcal{C}^1$) function $f$ of $m$ variables and such that $f(0)=0$ we are able to find an exponent $\theta\in(0,1)$ such that in a neighbourhood of zero
$$
|f(x)|^\theta\leq\mathrm{const.}||\nabla f(x)||
$$
where $\nabla f(x)$ denotes the (complex or real) gradient of $f$. The classical version (for a real analytic function) was obtained by \L ojasiewicz from his famous \L ojasiewicz inequality that he used to solve L. Schwartz's Division Problem. One of the simplest and most important applications of this inequality is the study of the analytic gradient dynamics, see \cite{L2} (the fact that $\theta\in(0,1)$ is of utmost importance; to make the paper self-consistent, we give a simple proof of this in our special case in Section 2). 

Our general aim is to obtain a parameter version of this inequality, i.e. to prove that given a well-parametrized family of functions we can find (locally) a uniform exponent for this inequality to be satisfied by each of these functions. Probably the simplest case is presented in this paper and it concerns a $\mu$-constant holomorphic unfolding. It may be particularly interesting e.g. in view of the results in \cite{GS}.

A general subanalytic parameter version of the gradient inequality will be given in a forthcoming paper.

\section{On the gradient exponent}

For convenience sake, we will give --- using only complex analytic geometry tools in the spirit of P\l oski --- yet another and elementary proof of the gradient inequality for a holomorphic germ $f\colon (\mathbb{C}^m, 0)\to (\mathbb{C}, 0)$ satisfying the condition $\dim_0\nabla f^{-1}(0)=0$, where $$\nabla f=\left(\frac{\partial f}{\partial z_1},\dots, \frac{\partial f}{\partial z_m}\right)\colon \mathbb{C}^m_z\to\mathbb{C}^m_w$$ is the complex gradient. What will be of greatest interest for our purposes is the bound on the exponent we obtain. 

We may assume $m>1$, since the case $m=1$ is a simple exercise involving power series. Hereafter we will insist on details that for simplicity we omit in the next section.

Our approach is based on \cite{P}. Take a connected neighbourhood $U$ of $0\in \mathbb{C}^m$ for which $\nabla f^{-1}(0)\cap U=\nabla f^{-1}(0)\cap\overline{U}=\{0\}$. Hence $\nabla f$ is a proper map from $U$ onto a connected neighbourhood $V$ of zero. This means that the projection $\pi(z,w)=w$ is proper on $\Gamma_{\nabla f}\cap U\times V$ and sends this part of the graph onto $V$. It is thence a branched covering (see \cite{Ch}) with covering number $\mu$. Its critical locus\footnote{I.e. the set of points $w\in V$ for which $\#\nabla f^{-1}(w)<\mu$ -- in this case it coincides with the critical values in the usual sense.} $\sigma\subsetneq V$ is analytic and $\mu$ is in fact the \textit{Milnor number} of $f$ at zero. For $w\in V\setminus \sigma$ we have $\nabla f^{-1}(w)=\{z^{(1)},\dots, z^{(\mu)}\}$ consisting of exactly $\mu$ points and so we can define for $t\in\mathbb{C}$
\begin{align*}
P(w,t):&=\prod_{z\in\nabla f^{-1}(w)}(t-f(z))=\\
&=t^\mu+a_1(w)t^{\mu-1}+\ldots+a_\mu(w),\\
&\textrm{where}\quad a_j(w)=(-1)^j\sum_{1\leq \iota_1<\ldots<\iota_j\leq \mu} f(z^{(\iota_1)})\cdot\ldots\cdot f(z^{(\iota_j)})
\end{align*}
are holomorphic functions on $V\setminus\sigma$ admitting continuous extensions onto $V$. Therefore, by the Riemann Extension Theorem, $P\in\mathcal{O}(V)[t]$. This polynomial is called \textit{the characteristic polynomial of} $f$ with respect to $\nabla f$. Clearly, $P(\nabla f,f)\equiv 0$ and $a_j(0)=0$. 

\begin{rem}
Note that in the same way we can define the characteristic polynomial of $f$ with respect to any holomorphic map germ $g\colon ({\C}^m,0)\to ({\C}^m,0)$ satisfying $g^{-1}(0)=\{0\}$.
\end{rem}

The Vi\`ete formul{\ae} yield $|f(z)|\leq 2\max_j|a_j(\nabla f(z))|^{1/j}$. On the other hand, since in a neighbourhood of zer there is $|a_j(w)|\leq c_j||w||^{\mathrm{ord}_0 a_j}$ for some $c_j>0$ ($\mathrm{ord}_0 h$ denotes the order of vanishing at zero\footnote{$\mathrm{ord}_0 h$ is the degree of the initial form in the expansion of $h\not\equiv 0$ into homogeneous forms near zero. There is $\mathrm{ord}_0 h=\max\{\eta>0\mid |h(z)|\leq \mathrm{const.}||z||^\eta$ in a neighbourhood of $0\}$, see \cite{Ch}.} of the holomorphic function $h$), this leads to
$$
|f(z)|\leq \mathrm{const.}||\nabla f(z)||^{\min_{j=1}^\mu\left(\frac{\mathrm{ord}_0 a_j}{j}\right)}
$$
in a neighbourhood of zero. We have obtained the gradient inequality with the exponent
$$
\theta=\max_{j=1}^\mu \frac{j}{\mathrm{ord}_0 a_j}
$$
and we shall prove in addition that $\theta<1$ (i.e.  $\mathrm{ord}_0 a_j> j$ for all $j$). This is still rather delicate (see the crucial article \cite{T} of B. Teissier, Corollaire 2 p. 270 and compare with \cite{PW} Theorem 2). 

Geometrically, the weaker condition:
$$\mathrm{ord}_0 a_j\geq j\ \textrm{for all}\ j,$$ is equivalent to saying that the tangent cone of $P^{-1}(0)$ at zero, described by the zeroes of the initial form $\mathrm{in} P$ in the expansion of $P$ into homogeneous forms at zero meets $\{0\}^m\times\mathbb{C}$ only at zero (since this means precisely that $\mathrm{in} P(0,t)=t^\mu$; actually, we will show that $\mathrm{in} P(w,t)=t^\mu$). Recall the Peano tangent cone:
$$
C_0(P^{-1}(0))=\{v\in\mathbb{C}^{m+1}\mid \exists P^{-1}(0)\ni x_\nu\to 0, \exists \lambda_\nu>0\colon \lambda_\nu x_\nu\to v\}.
$$

The weak version follows from \cite{P}, as $P(\nabla f, f)=0$ is an equation of integral dependence of $f$ over the ideal $\left\langle \frac{\partial f}{\partial z_1},\dots, \frac{\partial f}{\partial z_m}\right\rangle$ which implies (actually, is equivalent to) the inequality $$|f(z)|\leq\mathrm{const.} ||\nabla f(z)||\leqno{(\star)}$$ in a neighbourhood of zero\footnote{From the expressions for $a_j$ and the Vi\`ete formul{\ae} it is easy to see that $P(\nabla f,f)\equiv 0$ implies $|f(z)|\leq\mathrm{const.}||\nabla f(z)||$. For the other way round it suffices to observe that $a_j$ belongs to $\mathfrak{m}^j=\langle z^\alpha\colon |\alpha|=j\rangle$, where $\mathfrak{m}$ is the maximal ideal of the ring $\mathcal{O}_m$.}. Now, since $P^{-1}(0)$ and $(\nabla f,f)(U)$ define the same set germ at zero, we see that for any $v=(0,t)\in C_0(P^{-1}(0))$ we can find sequences $U\ni z_\nu\to 0$ and $\lambda_\nu\to 0^+$ such that $\lambda_\nu \nabla f(z_\nu)\to 0$ and $\lambda_\nu f(z_\nu)\to t$. Therefore, $(\star)$ implies $t=0$. 
We need only to prove that $\mathbb{C}^m\times\{0\}\supset C_0(P^{-1}(0))$ (for we know that $C_0(P^{-1}(0))$ is an algebraic cone of dimension $\dim_0 P^{-1}(0)=m$). 
We have just seen that $C_0(P^{-1}(0))\cap \{0\}^m\times\mathbb{C}=\{0\}^{m+1}$. 
Shrinking the neighbourhoods we may assume that $P^{-1}(0)=(\nabla f,f)(U)$ and this has pure dimension $m$. Then we can use the elementary Proposition from \cite{Ch} p. 86: for any $(w,t)\in C_0(P^{-1}(0))\setminus \{(0,0)\}$ (in particular for $w\neq 0$) we have 
$$
(w,t)\in C_0(P^{-1}(0)\cap (\mathbb{C}w\times\mathbb{C}))=:C.
$$
But, $P^{-1}(0)\cap (\mathbb{C}w\times\mathbb{C})=\{(\nabla f(z), f(z))\mid z\in \nabla f^{-1}(\mathbb{C}w)\}$ is a curve as the image by $(\nabla f, f)$ of the polar curve 
$X:=\nabla f^{-1}(\mathbb{C}w)$ ($\nabla f$ is surjective and we are in a neighbourhood of the origin). Fix $(w,t)$ and denote by $\Gamma$ an irreducible component of $(\nabla f, f)(X)$ for which $(w,t)\in C_0(\Gamma)$. Then there exists  $v\in\mathbb{C}^{m+1}\setminus\{0\}$ such that $C_0(\Gamma)=\mathbb{C}v$. Now we need the following lemma.
\begin{lem}
Let $h\colon W\to \mathbb{C}^n$ be a holomorphic function defined in a neighbourhood $W\subset\mathbb{C}$ of zero, satisfying $h(0)=0$ and such that $h(W)$ defines an irreducible curve germ at zero. Then there is a sequence $W\ni t_\nu\to 0$ such that $h(t_\nu)/||h(t_\nu)||$ converge to a vector $v$ for which $C_0(h(W))=\mathbb{C}v$.
\end{lem}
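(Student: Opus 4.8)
The plan is to read the tangent direction directly off the leading term of the Taylor expansion of $h$. Since $h(0)=0$ and $h\not\equiv0$ (the germ $h(W)$ being one-dimensional), I set $d:=\mathrm{ord}_0 h\geq1$ and write $h(t)=\sum_{k\geq d}a_k t^k$ with $a_k\in\mathbb{C}^n$ and $c:=a_d\neq0$, so that $h(t)=t^d(c+o(1))$ as $t\to0$. The candidate direction is $v:=c/||c||$, and the witnessing sequence is simply $t_\nu:=1/\nu$ (which lies in $W$ for large $\nu$). For real $r>0$ one has $h(r)=r^d(c+O(r))$, hence $||h(r)||=r^d(||c||+O(r))$ and $h(r)/||h(r)||\to c/||c||=v$; this is the asserted convergence. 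Taking $\lambda_\nu:=1/||h(t_\nu)||>0$ (finite once $W$ is shrunk so that $h^{-1}(0)\cap\overline{W}=\{0\}$) shows $\lambda_\nu h(t_\nu)\to v$, i.e. $v\in C_0(h(W))$.

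It then remains to prove $C_0(h(W))=\mathbb{C}v$. The inclusion $\mathbb{C}v\subseteq C_0(h(W))$ follows from $v\in C_0(h(W))$ together with the fact that the Peano tangent cone of an analytic set is a complex (algebraic) cone, hence invariant under multiplication by $\mathbb{C}$ (see \cite{Ch}). For the reverse inclusion I fix $u\in C_0(h(W))\setminus\{0\}$; there are $x_\mu=h(s_\mu)\in h(W)$ with $x_\mu\to0$ and reals $\lambda_\mu>0$ such that $\lambda_\mu x_\mu\to u$. Keeping $|s_\mu|\leq\rho$ for a fixed small $\rho$ and using $h^{-1}(0)\cap\overline{W}=\{0\}$, a compactness argument forces $s_\mu\to0$: every limit point $s_0$ of $(s_\mu)$ satisfies $h(s_0)=0$, whence $s_0=0$. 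Then $h(s_\mu)=s_\mu^d(c+o(1))$, so putting $w_\mu:=\lambda_\mu s_\mu^d\in\mathbb{C}$ we get $w_\mu(c+o(1))\to u$; since $||c+o(1)||\to||c||>0$ and $u$ is finite, $(w_\mu)$ is bounded, and any convergent subsequence $w_\mu\to\zeta$ yields $u=\zeta c\in\mathbb{C}v$. Hence $C_0(h(W))=\mathbb{C}v$, which completes the argument.

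The delicate point I expect to be the main obstacle is the control of the parameters $s_\mu$ of points tending to the origin along the curve: one must know that such points come from parameters near $0$, so that the leading-term expansion of $h$ may be applied. This is exactly where the properness of $h$ on a sufficiently small disk (equivalently $h^{-1}(0)\cap\overline{W}=\{0\}$) enters. The only external input I rely on is the classical fact that the tangent cone $C_0(h(W))$ of the analytic germ is a complex algebraic cone, which turns the single membership $v\in C_0(h(W))$ into the full line $\mathbb{C}v$; note also that the irreducibility hypothesis is automatic here, since the image of the irreducible germ $(\mathbb{C},0)$ under the non-constant arc $h$ is irreducible, so that a single tangent line is indeed to be expected.
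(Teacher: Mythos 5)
Your existence argument (the leading Taylor term $h(t)=t^d(c+o(1))$ along real positive $t_\nu$, giving $v=c/\|c\|\in C_0(h(W))$ and hence $\mathbb{C}v\subseteq C_0(h(W))$ by the complex-cone property) is fine, and is essentially the first half of the paper's proof in more explicit form. The genuine gap is in your reverse inclusion, at the words ``keeping $|s_\mu|\leq\rho$''. A point $x_\mu\in h(W)$ close to $0$ need not have any preimage close to $0$: its parameters may sit near another zero of $h$ in $W$, or accumulate near $\partial W$. The properness you invoke ($h^{-1}(0)\cap\overline{W}=\{0\}$ after shrinking) does not repair this, because shrinking $W$ to a small disc $D_\rho$ changes the set $h(W)$, and a priori the germ of $h(W)$ at $0$ may be strictly larger than the germ of $h(D_\rho)$. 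Concretely, for $W=\{|t|<1\}$ and $h(t)=t(2t-1)(1,t)$, the germ of $h(W)$ at $0$ has two branches, with tangent lines $\mathbb{C}(1,0)$ (from parameters near $0$) and $\mathbb{C}(2,1)$ (from parameters near $1/2$); your argument, applied verbatim, would ``prove'' that the cone is a single line. This also shows that your closing remark --- that irreducibility of the germ of $h(W)$ is automatic --- is wrong for the fixed $W$ of the statement: it is automatic only for the germ of $h(D_\rho)$ with $\rho$ small, and it is precisely the hypothesis that rules out extra branches coming from far parameters. Indeed, your proof never actually uses this hypothesis in a justified way, yet the statement is false without it, so a gap had to be present somewhere.

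The gap is fillable, and the repair is exactly where irreducibility must enter: the germ at $0$ of $h(D_\rho)$ is an irreducible curve germ contained in the germ of $h(W)$, which by hypothesis is irreducible of the same dimension $1$; hence the two germs coincide, so points of $h(W)$ sufficiently near $0$ do lie in $h(D_\rho)$, and your compactness argument then legitimately yields $s_\mu\to 0$. Alternatively you could follow the paper, which avoids the reverse inclusion altogether: it takes an arbitrary sequence $t_\nu\to 0$ off $h^{-1}(0)$, extracts a convergent subsequence of $h(t_\nu)/\|h(t_\nu)\|$ on the unit sphere to get one direction $v$ in the cone, and then simply quotes the classical fact (see \cite{Ch}) that the tangent cone of an irreducible curve germ is a single complex line, so that the line $\mathbb{C}v$ already contained in the cone must be the whole cone. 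Your computation via the leading coefficient is a reasonable elementary substitute for that classical fact, but only once the parameter control is properly justified.
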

\begin{proof}[Proof of the Lemma]
Take a sequence $W\setminus h^{-1}(0)\ni t_\nu\to 0$. Since $\{h(t_\nu)/||h(t_\nu)||\}\subset\mathbb{S}^{2n-1}$, extcracting, if necessary, a subsequence, we may assume that these terms converge to some $v\in\mathbb{S}^{2n-1}$. But $h(t_\nu)\to 0$ and $\lambda_\nu=1/||h(t_\nu)||>0$. Hence $v\in C_0(h(W))$, whence the cone contains the line $\mathbb{C}v$. The germ at zero of $h(W)$ being irreducible, the tangent cone coincides with the line found.
\end{proof}

Let $\Gamma'\subset X$ be an irreducible component such that $(\nabla f, f)(\Gamma')=\Gamma$. Consider a Puiseux parametrization $\gamma\colon W\to \Gamma'$. Then $h:=(\nabla f, f)\circ \gamma$ satisfies the assumptions of the preceding lemma and so we have a sequence $t_\nu\to 0$ such that $v=\lim h(t_\nu)/||h(t_\nu)||$. Write $v=(v', v'')\in\mathbb{C}^m\times\mathbb{C}$; we will show that $v''=0$. We look at $f(\gamma(t_\nu))$; on the one hand,
$$
|f(\gamma(t_\nu))|\leq\mathrm{const.}|t_\nu|^{\mathrm{ord}_0(f\circ\gamma)}, \> \nu\gg 1,
$$
whereas on the other,
$$
||h(t_\nu)||\geq ||\nabla f(\gamma(t_\nu))||\geq\mathrm{const.}|t_\nu|^{\mathrm{ord}_0(\nabla f\circ\gamma)},\> \nu\gg 1,
$$
where obviously\footnote{From the expansion into a power series it is easy to see that for a non-constant analytic germ $g\colon (\mathbb{C}, 0)\to(\mathbb{C}, 0)$ there is $c_1|t|^q\leq|g(t)|\leq c_2|t|^q$ in a neighbourhood of zero with $q=\mathrm{ord}_0 g$, $c_1, c_2>0$. Hence, if $g=(g_1,\dots, g_n)$ on $(\mathbb{C}, 0)$ the same kind of inequality holds true with $q=\min_j\mathrm{ord}_0 g_j$.} $\mathrm{ord}_0(\nabla f\circ\gamma)=\min_j\mathrm{ord}_0(\frac{\partial f_j}{\partial z}\circ \gamma)$. For $\nu$ large enough,
$$
\frac{|f(\gamma(t_\nu))|}{||h(t_\nu)||}\leq\mathrm{const.} |t_\nu|^{\mathrm{ord}_0(f\circ\gamma)-\mathrm{ord}_0(\nabla f\circ\gamma)}
$$
and it remains to observe that the exponent is positive. 

To see this we argue classically as follows: if $g\colon (\mathbb{C}, 0)\to(\mathbb{C}, 0)$ is an analytic germ, then $\mathrm{ord}_0 g=\mathrm{ord}_0 g'+1$, whence $\mathrm{ord}_0(f\circ\gamma)=\mathrm{ord}_0(f\circ\gamma)'+1$. But (since $\mathrm{ord}_0\sum_1^k g_j\geq\min_j\mathrm{ord}_0 g_j$)
\begin{align*}
\mathrm{ord}_0(f\circ\gamma)'&=\mathrm{ord}_0\langle \nabla f\circ\gamma, \gamma'\rangle\geq \\
&\geq \min_j\mathrm{ord}_0 \left(\frac{\partial f}{\partial z_j}\circ\gamma\right)\gamma_j'=\\
&=\min_j\left\{\mathrm{ord}_0\left(\frac{\partial f}{\partial z_j}\circ\gamma\right)+\mathrm{ord}_0\gamma_j'\right\}.
\end{align*}
Since $\mathrm{ord}_0\gamma'_j\geq 0$ for all $j$, the last expression is greater or equal to $\mathrm{ord}_0(\nabla f\circ \gamma)$.

Summing up, we have proved the following Proposition (cf. \cite{T} Corollaire 2 p. 270 and \cite{P}, \cite{PW}):
\begin{prop}\label{wykladnik}
If $f\in\mathcal{O}_m$ is a holomorphic germ such that $\nabla f$ has an isolated zero at the origin, then the coefficients of the characteristic polynomial $P(w,t)=t^\mu+a_1(w)t^{\mu-1}+\ldots+a_\mu(w)\in \mathcal{O}_m[t]$, of $f$ with respect to $\nabla f$, where $\mu$ is the Milnor number of $f$, all satisfy $\mathrm{ord}_0 a_j\geq j+1$ and we have in a neighbourhood of zero the gradient inequality
$$
|f(z)|^{\max_{j=1}^\mu \frac{j}{\mathrm{ord}_0a_j}}\leq\mathrm{const.} ||\nabla f(z)||.
$$
\end{prop}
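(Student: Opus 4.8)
The plan is to split the statement into its two parts: the gradient inequality itself, which is essentially a formal consequence of the existence of the characteristic polynomial, and the sharp order bound $\mathrm{ord}_0 a_j \geq j+1$, which is the genuinely delicate point (it is exactly what forces the exponent to be $<1$). For the inequality I would start from $P(\nabla f, f) \equiv 0$ and read off $f$ as a root of $P$ evaluated at $\nabla f$: the Vi\`ete relations give $|f(z)| \leq 2\max_j |a_j(\nabla f(z))|^{1/j}$, and combining this with the elementary estimate $|a_j(w)| \leq c_j ||w||^{\mathrm{ord}_0 a_j}$ near zero yields at once $|f(z)| \leq \mathrm{const.}\,||\nabla f(z)||^{\min_j(\mathrm{ord}_0 a_j / j)}$, i.e. the gradient inequality with exponent $\theta = \max_j (j/\mathrm{ord}_0 a_j)$.

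The weak bound $\mathrm{ord}_0 a_j \geq j$ I would obtain for free: since $P(\nabla f, f) = 0$ is an equation of integral dependence of $f$ over the Jacobian ideal $\langle \partial f/\partial z_1, \dots, \partial f/\partial z_m\rangle$, each coefficient $a_j$ lies in $\mathfrak{m}^j$ and so vanishes to order at least $j$. Geometrically this says that the tangent cone $C_0(P^{-1}(0))$ meets the $t$-axis $\{0\}^m \times \C$ only at the origin. The real obstacle is to upgrade $\geq j$ to $\geq j+1$, equivalently to prove $\mathrm{in}\,P(w,t) = t^\mu$, equivalently $C_0(P^{-1}(0)) \subset \C^m \times \{0\}$; here I would fix an arbitrary tangent vector $(w,t) \in C_0(P^{-1}(0)) \setminus \{0\}$ with $w \neq 0$ and aim to show $t = 0$.

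This is where the main work lies, and I expect the reduction to one-dimensional slices to be the crux. Using the elementary slicing Proposition from \cite{Ch}, I would replace $C_0(P^{-1}(0))$ by the tangent cone of the slice $P^{-1}(0) \cap (\C w \times \C)$, which is the image under $(\nabla f, f)$ of the polar curve $X = \nabla f^{-1}(\C w)$. Choosing an irreducible branch $\Gamma$ of that image with $(w,t) \in C_0(\Gamma)$ and a Puiseux parametrization $\gamma$ of the corresponding branch $\Gamma' \subset X$ reduces everything to the single analytic arc $h = (\nabla f, f)\circ\gamma$. The Lemma then identifies the tangent direction $v = (v', v'')$ of $\Gamma$ with $\lim h(t_\nu)/||h(t_\nu)||$, so that $t = 0$ will follow once I show $v'' = 0$.

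The decisive estimate is then the order comparison $\mathrm{ord}_0(f\circ\gamma) > \mathrm{ord}_0(\nabla f\circ\gamma)$ along the arc, since it forces $|f(\gamma(t_\nu))| / ||h(t_\nu)|| \to 0$ and hence kills the last coordinate of $v$. For this I would invoke the chain rule $(f\circ\gamma)' = \langle \nabla f\circ\gamma, \gamma'\rangle$, so that by the rules $\mathrm{ord}_0(\sum_j g_j) \geq \min_j \mathrm{ord}_0 g_j$ and $\mathrm{ord}_0(gh) = \mathrm{ord}_0 g + \mathrm{ord}_0 h$ one gets $\mathrm{ord}_0(f\circ\gamma)' \geq \min_j\{\mathrm{ord}_0(\partial f/\partial z_j \circ \gamma) + \mathrm{ord}_0 \gamma_j'\} \geq \mathrm{ord}_0(\nabla f\circ\gamma)$, the last inequality because each $\mathrm{ord}_0 \gamma_j' \geq 0$. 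Combined with the one-variable fact $\mathrm{ord}_0(f\circ\gamma) = \mathrm{ord}_0(f\circ\gamma)' + 1$, this supplies the strict gain of one and closes the argument. The only residual subtlety I anticipate is bookkeeping: ensuring that the branches, the Puiseux parametrization, and the slicing Proposition are all applied on a neighbourhood small enough that $P^{-1}(0) = (\nabla f, f)(U)$ has pure dimension $m$, so that the tangent-cone computations are legitimate.
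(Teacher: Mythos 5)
Your proposal is correct and follows essentially the same route as the paper's own proof: Vi\`ete formul{\ae} plus the order estimate for the inequality, the weak bound $\mathrm{ord}_0 a_j\geq j$ via P\l oski's integral-dependence result, the reformulation as $C_0(P^{-1}(0))\subset\C^m\times\{0\}$, Chirka's slicing Proposition reducing to the polar curve, the normalized-limit Lemma along a Puiseux arc, and the chain-rule order comparison giving the strict gain of one. The only (harmless) looseness is the direction of your inference ``integral dependence $\Rightarrow a_j\in\mathfrak{m}^j$,'' which is stated at the same level of rigor as the paper's own appeal to \cite{P}.
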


\section{Gradient inequality with parameter}

Let $f(x,t)\in\mathcal{O}_{m+k}$ be a holomorphic germ. Write $f_t(x):=f(x,t)$ and put $g_t(x):=\nabla f_t(x)$ for the complex gradient. The resulting germ $g(x,t):=g_t(x)$ is, of course, holomorphic.  Throughout this section we assume that $f(0,t)\equiv 0$ (which is not really restrictive).

We will assume also that for all $t$ small enough, $g_t^{-1}(0)=\{0\}$ is isolated at zero. Furthermore, we will assume that the Milnor numbers $m_0(g_t)\equiv \mu$ are \textit{constant}.

However, before we do that, let us have a closer look at the general situation. If we just assume that $g_0^{-1}(0)=\{0\}$ is isolated, then the holomorphic germ $G(x,t):=(g(x,t),t)$ has an isolated zero at the origin (since $G^{-1}(0)=g_0^{-1}(0)\times\{0\}$). Therefore, we may consider it as a branched covering between domains\footnote{They can be chosen to be polydiscs.} $U\times V, W\subset{\C}^{m+k}$ around zero, of geometric multiplicity\footnote{To state it clearly: the geometric multiplicity of a branched covering $h$, denoted by $m(h)$, is its sheet number, whereas the geometric multiplicity of it at a point $x_0$, denoted $m_{x_0}(h)$, is the generic number of points converging to $x_0$ from nearby fibres.} $m(G)$ coinciding with the geometric multiplicity $m_0(G)$ at the origin. Put $\mu:=m_0(G)$.

Observe that $G^{-1}(y,t)=g_t^{-1}(y)\times\{y\}$ and in particular all the $g_t$, $t\in V$, are branched coverings $U\to W_t$, where $W_t=\{x\in{\C}^m\mid (x,t)\in W\}$. However, $m(g_t)\geq m_0(g_t)$ where the latter is precisely the Milnor number of $f_t$ in case $g_t(0)=0$. (Of course, our assumptions imply that at least $m_0(g_0)=m(g_0)$.) To be more precise, it is a classical result due presumably to W. Stoll that for any branched covering $h$ between domains in ${\C}^N$, $$m(h)=\sum_{x\in h^{-1}(h(x_0))} m_x(h)\leqno{(\#)}$$ 

Clearly, $m(g_t)\leq\mu$. Actually, it turns out that $m(g_t)=\mu$, for all $t\in V$. In order to see this, we will look first at the critical locus $\sigma$ of $G$, which is precisely $G(C_G)$ where $C_G=\{(x,t)\in U\times V\mid \det d_{(x,t)} G=0\}$ is the set of critical points. Observe that 
$$
\det d_{(x,t)} G=\det \frac{\partial g}{\partial x}(x,t)=\det\frac{\partial g_t}{\partial x}(x).
$$
But we know that the critical locus $\sigma_t$ of $g_t$ is precisely $g_t(C_{g_t})$ and so we conclude that 
$$
x\in\sigma_t\ \Leftrightarrow\ (x,t)\in\sigma.
$$
In particular, this implies that the sections $\sigma\cap ({\C}^m\times\{t\})$ are nowheredense in $W\cap  ({\C}^m\times\{t\})$ whence we conclude that $m(g_t)=\mu$ for all $t\in V$. Indeed, for each $t$, there is always a point $y$ near zero that is critical neither for $G$ (i.e. $(y,t)\notin W\setminus \sigma$), nor for $g_t$ (i.e. $y\notin \sigma_t$) and so
$$
\mu=\#G^{-1}(y,t)=\#(g_t^{-1}(y)\times\{t\})=\#g_t^{-1}(y)=m(g_t).
$$

All this leads to the following simple observation that should be compared with condition (6) from the L\^e-Saito-Teissier criterion of $\mu$-constancy for one-parameter unfoldings as exposed in \cite{G} p. 161:
\begin{prop}\label{GreuelGen}
Keeping the notations introduced so far, assume that $f\in\mathcal{O}_{m+k}$ is such that $g_t(0)=0$ for $|t|\ll 1$ and that $g_0^{-1}(0)=\{0\}$ is isolated. Then \begin{enumerate} 
\item For all $t$ sufficiently close to zero $g_t^{-1}(0)$ is isolated too and the resulting branched coverings are all $\mu$-sheeted, where $\mu=m_0(g_0)$ is the Milnor number of $g_0$; in particular the Milnor numbers of $g_t$ are not greater than $\mu$;
\item The unfolding $f$ of $f_0$ is $\mu$-constant iff in a neighbourhood of zero we have
$$
\left\{(x,t)\in{\C}^m\times{\C}^k\colon\frac{\partial f}{\partial x_j}(x,t)=0, j=1,\dots, m\right\}=\{0\}^m\times{\C}^k.
$$
\end{enumerate}
\end{prop}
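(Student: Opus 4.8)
The plan is to note that assertion (1) is essentially a summary of the analysis carried out just before the statement, so that the only genuinely new ingredient is the reduction of (2) to Stoll's formula $(\#)$.

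For (1) I would merely collect the facts already derived. As $G$ is a branched covering of $U\times V$, all of its fibres are finite; in particular $G^{-1}(0,t)=g_t^{-1}(0)\times\{t\}$ is finite, so $0$ is isolated in $g_t^{-1}(0)$. The identity $m(g_t)=\mu$ was obtained above from $\det d_{(x,t)}G=\det\frac{\partial g_t}{\partial x}(x)$ together with the nowhere-density of the slices $\sigma\cap(\C^m\times\{t\})$ of the critical locus; hence each $g_t$ is a $\mu$-sheeted branched covering. Since for $g_t(0)=0$ the Milnor number of $f_t$ equals $m_0(g_t)$, the general inequality $m_0(g_t)\le m(g_t)=\mu$ yields the last clause.

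For (2) the decisive step is to apply $(\#)$ to $h=g_t$ at the base point $x_0=0$. As $g_t(0)=0$ this gives
\[
\mu=m(g_t)=\sum_{x\in g_t^{-1}(0)}m_x(g_t)=m_0(g_t)+\sum_{x\in g_t^{-1}(0)\setminus\{0\}}m_x(g_t).
\]
Because the Milnor number of $f_t$ is $m_0(g_t)$, the unfolding is $\mu$-constant exactly when $m_0(g_t)=\mu$ for all small $t$, that is, exactly when the residual sum vanishes for every such $t$. Every local multiplicity satisfies $m_x(g_t)\ge 1$, so this occurs if and only if $g_t^{-1}(0)=\{0\}$ for each $t$ near zero. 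Since the zero set of $g=\bigl(\tfrac{\partial f}{\partial x_1},\dots,\tfrac{\partial f}{\partial x_m}\bigr)$ is $\{(x,t): g_t(x)=0\}=\bigcup_t\bigl(g_t^{-1}(0)\times\{t\}\bigr)$, the condition $g_t^{-1}(0)=\{0\}$ for all $t$ is precisely the displayed equality of germs.

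The proposition therefore offers no serious obstacle; the points that require care rather than ingenuity are the uniformity in $t$ and the accounting of multiplicities. Uniformity is automatic, because all the fibres $g_t^{-1}(0)$ live inside the single branched covering $G$ over the fixed domain $U\times V$, so the set equality holds in one neighbourhood of the origin simultaneously for all small $t$. The remaining facts --- that the Milnor number of $f_t$ coincides with $m_0(g_t)$, and that each $m_x(g_t)$ is a positive integer, so that the residual sum vanishes precisely when $g_t$ has no zero other than $0$ --- are standard for branched coverings and finite holomorphic germs.
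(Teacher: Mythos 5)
Your proposal is correct and follows essentially the same route as the paper: part (1) is read off from the preceding discussion of the covering $G$ and the slices of its critical locus, and part (2) is deduced from Stoll's formula $(\#)$ applied to $g_t$ at the origin, with the positivity of the local multiplicities forcing $g_t^{-1}(0)=\{0\}$ exactly in the $\mu$-constant case. You merely spell out the details that the paper's terse proof leaves implicit, including the uniformity of the neighbourhood in $t$, which is a welcome clarification rather than a deviation.
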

\begin{proof}
The first part of the assertion follows from the preceding discussion. The second part is a consequence of $(\#)$. Indeed, we already know that $m(g_t)=\mu$ and we assumed that $g_t(0)=0$. Thus,  the formula in question implies that $m_0(g_t)=\mu$ iff $\#g_t^{-1}(0)=1$. This gives (2).
\end{proof}

After this introduction, the main theorem we are aiming at is the following:
\begin{thm}
If $f(x,t)\in\mathcal{O}_{m+k}$ is such that for all $t$ small enough, $f_t(x)$ has an isolated singularity at zero with constant Milnor number $\mu$, then for all $(x,t)$ in a neighbourhood of zero
 $$
|f_t(x)|^{\frac{\mu}{\mu+1}}\leq  C ||\nabla f_t(x)||
$$
where $C>0$. Note that the exponent belongs to $[1/2,1)$ and $f$ itself satisfies the gradient inequality (with respect to $(x,t)$) with this exponent.
\end{thm}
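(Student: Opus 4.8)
The plan is to manufacture a single \emph{family} characteristic polynomial whose coefficients are jointly holomorphic in $(y,t)$, and then to promote the fibrewise order bounds of Proposition \ref{wykladnik} into one uniform estimate; once this is done the gradient inequality follows exactly as in Section 2, but now with a constant independent of $t$.

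First I would apply the construction of Section 2 (cf. the Remark there) to the germ $f\in\mathcal{O}_{m+k}$ relative to the branched covering $G(x,t)=(g(x,t),t)$ of multiplicity $\mu$ built before Proposition \ref{GreuelGen}. Since $G^{-1}(y,t)=g_t^{-1}(y)\times\{t\}$, setting
$$P((y,t),s):=\prod_{x\in g_t^{-1}(y)}\bigl(s-f(x,t)\bigr)=s^\mu+A_1(y,t)s^{\mu-1}+\dots+A_\mu(y,t)$$
produces coefficients $A_j(y,t)=(-1)^j\sum_{\iota_1<\dots<\iota_j}f(x^{(\iota_1)},t)\cdots f(x^{(\iota_j)},t)$ that are holomorphic on $W$ by the Riemann Extension Theorem. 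By Proposition \ref{GreuelGen}(2), $\mu$-constancy forces $g_t^{-1}(0)=\{0\}$ for every small $t$, so each $f_t$ has an isolated singularity with Milnor number $\mu$ and, restricted to a fixed $t$, $P((\cdot,t),s)$ is precisely the characteristic polynomial of $f_t$ with respect to $\nabla f_t$; in the notation of Section 2 this means $A_j(\cdot,t)=a_j^{(t)}$.

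The crucial step, and the one I expect to be the main obstacle, is to make the order estimates uniform in $t$. Proposition \ref{wykladnik} applied to each $f_t$ gives $\mathrm{ord}_0 a_j^{(t)}\geq j+1$, i.e.\ every $y$-derivative $\partial_y^\alpha A_j(0,t)$ with $|\alpha|\leq j$ vanishes. As these are holomorphic functions of $t$ that vanish for all small $t$, they vanish identically, and a Hadamard-type factorization in the $y$-variables (Taylor expansion with holomorphic remainder) yields $A_j(y,t)=\sum_{|\alpha|=j+1}y^\alpha C_{j,\alpha}(y,t)$ with $C_{j,\alpha}$ holomorphic. Bounding the $C_{j,\alpha}$ on a compact polydisc $\overline{W'}\subset W$ then gives a single constant $c_j>0$ with $|A_j(y,t)|\leq c_j||y||^{j+1}$, uniformly in $t$. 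The point is that the fibrewise inequalities of Section 2 are immediate, but a priori their constants and neighbourhoods depend on $t$; the joint holomorphy of the $A_j$ is exactly what lets one extract a constant valid for all $t$ at once.

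Finally I would conclude by the Vi\`ete/Cauchy root bound, exactly as in Section 2. Since $f(x,t)$ is a root of $P((g_t(x),t),\cdot)$, we have
$$|f(x,t)|\leq 2\max_{j}|A_j(g_t(x),t)|^{1/j}\leq 2\max_{j}\bigl(c_j^{1/j}\,||\nabla f_t(x)||^{(j+1)/j}\bigr).$$
Because $(j+1)/j\geq(\mu+1)/\mu$ for $1\leq j\leq\mu$ and $||\nabla f_t(x)||$ is small near the origin, this collapses to $|f(x,t)|\leq C'||\nabla f_t(x)||^{(\mu+1)/\mu}$ with a uniform $C'$; raising to the power $\mu/(\mu+1)\in[1/2,1)$ gives the asserted inequality with $C=(C')^{\mu/(\mu+1)}$. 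The last claim follows at once, since $||\nabla_{(x,t)}f||\geq||\nabla_x f_t||=||\nabla f_t(x)||$ shows that the same estimate holds with the full gradient of $f$.
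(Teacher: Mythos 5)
Your proposal is correct and follows essentially the same route as the paper: the family characteristic polynomial of $f$ with respect to $G(x,t)=(g(x,t),t)$, its fibrewise identification with the characteristic polynomials of the $f_t$ (justified via Proposition \ref{GreuelGen}), the fibrewise order bound $\mathrm{ord}_0 a_j(\cdot,t)\geq j+1$ from Proposition \ref{wykladnik}, and the final Vi\`ete estimate yielding the exponent $\mu/(\mu+1)$. The only minor divergence is the uniformization step: you get $|A_j(y,t)|\leq c_j||y||^{j+1}$ from the identical vanishing of the $y$-derivatives up to order $j$ together with a Hadamard-type factorization bounded on a compact polydisc, whereas the paper expands $a_j$ in a Hartogs series and applies the Cauchy inequalities on one-dimensional slices --- two standard implementations of the same idea, both exploiting the joint holomorphy of the coefficients in $(y,t)$.
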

\begin{proof}
The proof will be given in several steps.

\textbf{Step 1}

Consider as earlier the proper mapping germ $G(x,t):=(g(x,t),t)$, 
$G\colon ({\C}^{m+k},0)\to({\C}^{m+k},0)$.

By Proposition \ref{GreuelGen} we know that the multiplicity at zero of the branched covering $G$ is $\mu$.


\textbf{Step 2}

Let now $P(y,t,s)\in\mathcal{O}_{m+k}[s]$ be the characteristic polynomial of $f$ with respect to $G$. Therefore, we have $P(t,g(x,t),f(x,t))\equiv 0$ and so by the Vi\`ete formul{\ae}:
$$
|f(x,t)|\leq 2\max_{j=1}^\mu |a_j(g(x,t),t)|^{1/j}
$$
where we write $P(y,t,s)=s^\mu+a_1(y,t)s^{\mu-1}+\ldots+a_\mu(y,t)$. 
\begin{lem}
$P(\cdot,t,\cdot)$ is 
the characteristic polynomial of $f_t$ with respect to $g_t$.
\end{lem}
\begin{proof}
For a generic point $y$ near zero there is $\#g_t^{-1}(y)=\mu$. Let $x^{(1)},\dots, x^{(\mu)}$ be the points in this fibre. Then since the fibre $G^{-1}(y,t)$ consists precisely of the points $(x^{(j)},t)$, it is maximal and we get 
$$
P(y,t,s)=\prod_{j=1}^\mu (s-f(x^{(j)},t))=\prod_{j=1}^\mu (s-f_t(x^{(j)}))
$$
which gives the assertion.
\end{proof}

In particular we have $a_j(0,t)=0$.

\textbf{Step 3}

Fix $j$ and consider the expansion of $a_j$ into a Hartogs series in a polydisc $P(r)$ centred at zero (the same for all $j=1,\dots,\mu$)
$$
a_j(y,t)=\sum_{\nu=\nu_j(t)}^{+\infty} \sum_{\ofrac{\alpha\in\mathbb{Z}_+^m}{|\alpha|=\nu}} \left(\frac{1}{\alpha!}D^{(\alpha,0)}a_j(0,t)\right)y^\alpha
$$
where $\nu_j(t):=\mathrm{ord}_0 a_j(\cdot,t)\geq j+1$ by Proposition \ref{wykladnik}. Write 
$$
b_{j,\nu}^t(y)=\sum_{\ofrac{\alpha\in\mathbb{Z}_+^m}{|\alpha|=\nu}} \left(\frac{1}{\alpha!}D^{(\alpha,0)}a_j(0,t)\right)y^\alpha
$$
for the $\nu$-homogeneoous term. For $(y,t)\in P(r)$ fixed and $\lambda\in{\C}$ we have
$$
a_j\left(\frac{y}{||y||}\lambda,t\right)=\sum_{\nu=\nu_j(t)}^{+\infty} \left(\frac{b_{j\nu}^t(y)}{||y||^\nu}\right)\lambda^\nu,\> \textrm{provided}\> |\lambda|<r.
$$

The Cauchy inequalities yield
$$
\left\vert \frac{b_{j\nu}^t(y)}{||y||^\nu}\right\vert\leq\sup_{|\lambda|<r}\frac{\left\vert a_j\left(\frac{y}{||y||}\lambda,t\right)\right\vert}{r^\nu}\leq \frac{M_j}{r^\nu}.
$$
with $M_j:=\sup\{|a_j(y,t)|\colon (y,t)\in P(r)\}>0$. Let $M=\max_{j=1}^\mu M_j$. Writing $j+1$ instead of $\nu_j(t)$ we have in $P(r)$,
\begin{align*}
|a_j(y,t)|&\leq \sum_{\nu=j+1}^{+\infty} \frac{M}{r^\nu} ||y||^\nu\leq\\
&\leq ||y||^{j+1} \frac{M}{r^{j+1}}\sum_{\nu=0}^{+\infty} \left(\frac{||y||}{r}\right)^{\nu}.
\end{align*}
This leads to the conclusion that for some constant $C>0$ we have for all $(y,t)$ in a neighbourhood $V$ of zero and all indices $j$,
$$
|a_j(y,t)|\leq C||y||^{j+1}.
$$

\textbf{Step 4}

Note that $g(0,t)=0$ and we obtain the inequalities
$$
|a_j(g(x,t),t)|\leq C||g(x,t)||^{j+1}.
$$
for all $j$ and $(x,t)$ in a neighbourhood $U$ of zero such that $G(U)\subset V$. 

This gives the gradient inequality with parameter and
$$
\max_{j=1}^\mu\frac{j}{j+1}=\frac{\mu}{\mu+1}
$$
as exponent. The proof is complete.
\end{proof}

\begin{rem}
In view of the preceding discussion, if the unfolding is not $\mu$-constant, we have at least the inequality $\mu_t\leq\mu_0$ for $t$ close to zero, where $\mu_t=m_0(g_t)$ is the Milnor number of $f_t$. Since the function $s\mapsto \frac{s}{s+1}$ is increasing and we may always assume that we are in a neighborhood in which $|f(t,x)|<1$, we easily obtain that 
$$
|f_t(x)|^{\frac{\mu_0}{\mu_0+1}}\leq C_t||\nabla f_t(x)||
$$
in a neighbourhood $V_t$ of zero that will depend on $t$ (cf. Proposition \ref{GreuelGen}) just as the constant $C_t$. 
\end{rem}

\section{Acknowledgements}

This research was partially supported by Polish Ministry of Science and Higher Education grant 1095/MOB/2013/0.

The author would like to thank A. P\l oski for the reference \cite{T} and M. Tib\u ar for attracting his attention to the paper \cite{G}. Many thanks are due to the University of Lille 1 for excellent working conditions.

\end{document}